\documentclass[11pt]{article}
\usepackage{amssymb,amsmath}
\usepackage{cases}
\usepackage{amsthm}
\usepackage{amsfonts}
\usepackage{makecell}
\usepackage{cite,color,xcolor}
\usepackage[margin=1.4 in]{geometry}
\usepackage[colorlinks,citecolor=blue,urlcolor=blue]{hyperref}
\usepackage{tikz}

\usetikzlibrary{patterns}
\usepackage[utf8]{inputenc}
\usepackage[pagewise]{lineno}
\usepackage{pgfplots}

\newtheorem{theorem}{Theorem}[section]
\newtheorem{corollary}{Corollary}[section]
\newtheorem{lemma}{Lemma}[section]
\newtheorem{proposition}{Proposition}[section]

\newtheorem{remark}{Remark}[section]

\usepackage{txfonts}

\newcommand{\bal}{\begin{align}}
\newcommand{\bbal}{\begin{align*}}
\newcommand{\beq}{\begin{equation}}
\newcommand{\eeq}{\end{equation}}
\newcommand{\bca}{\begin{cases}}
\newcommand{\eca}{\end{cases}}
\def\div{\mathord{{\rm div}}}
\newcommand{\supp}{{\rm supp}\,}
\newcommand{\pa}{\partial}
\newcommand{\fr}{\frac}
\newcommand{\na}{\nabla}

\newcommand{\cd}{\cdot}
\newcommand{\ep}{\varepsilon}
\newcommand{\dd}{\mathrm{d}}

\newcommand{\R}{\mathbb{R}}

\newcommand{\D}{\mathrm{div}~}

\newcommand{\f}{\left}
\newcommand{\g}{\right}

\begin{document}
\bibliographystyle{plain}
\title{Strong ill-posedness of the 2D Boussinesq equations in supercritical Besov spaces}

\author{
 Jinlu Li
 \footnote{
School of Mathematics and Computer Sciences, Gannan Normal University, Ganzhou 341000, China.
\text{E-mail: lijinlu@gnnu.edu.cn}}
\quad and\quad
Yanghai Yu
\footnote{
School of Mathematics and Statistics, Anhui Normal University, Wuhu 241002, China.
\text{E-mail: yuyanghai214@sina.com} (Corresponding author)}
}
\date{\today}
\maketitle
\begin{abstract}
In this paper, we prove that the 2D Boussinesq equations are strongly ill-posed in the supercritical Besov spaces $B^s_{p,q}$ and Sobolev spaces $W^{s,p}$ with $(p,q)\in(1,\infty)\times [1,\infty]$ and $s\in(0,1+2/p)$ by constructing an initial data with arbitrarily small norm for which the solution of the system exhibits norm inflation almost instantaneously. As a further application, we prove the instability of perturbations near the hydrostatic equilibrium for the 2D Boussinesq equations in the same $B^s_{p,q}$ and $W^{s,p}$.
\end{abstract}

{\bf Keywords:} Boussinesq equations, Norm inflation, Ill-posedness.

{\bf MSC (2010):} 35Q31, 35B30, 35Q35, 35R25.

\section{Introduction}

In this paper, we consider the Cauchy problem of the following inviscid Boussinesq equations
\begin{align}\label{2EB}
\begin{cases}
\pa_t u+u\cdot \nabla u+\nabla P=\theta e_2, &\quad (t,\mathbf{x})\in \R^+\times\R^2,\\
\pa_t \theta+u\cdot \nabla \theta=0,&\quad (t,\mathbf{x})\in \R^+\times\R^2,\\
\div\ u=0,&\quad (t,\mathbf{x})\in \R^+\times\R^2,\\
(u,\theta)(0,\mathbf{x})=(u_0,\theta_0)(\mathbf{x}), &\quad \mathbf{x}\in \R^2,
\end{cases}
\end{align}
where $u$ is the velocity satisfying the 2D Euler equations driven by $\theta$, which represents the density or temperature of the fluid, depending on the physical context. Also, $P$ denotes the scalar pressure and $e_2 = (0, 1)$ is the unit vector in the vertical direction. $\theta e_2$ in the velocity equation models the effect of gravity on the fluid motion.

The 2D Boussinesq system \eqref{2EB} is of significant interest in (mathematical) fluid dynamics since the system is widely used to describe the dynamics of the oceans or the atmosphere, where rotation and stratification are important, as well as other physical problems. For more details on other physical problems, see \cite{maj,gil}. Besides physical applications, the 2D Boussinesq system \eqref{2EB} is also known to retain some key features of the 3D Euler and Navier-Stokes equations, such as the vortex stretching mechanism. Indeed, it has been commonly recognized that the growth of the vorticity associated with \eqref{2EB} depends on the temporal accumulation of $\nabla u$, which is a scenario similar to the vortex stretching effect in 3D incompressible flows \cite{MB}. Another important feature is that, the inviscid 2D Boussinesq equations can be identified with the 3D axi-symmetric Euler equations with swirl away from the symmetric axis $r=0$ \cite{MB}.

Before stating our result, we summarize the well-posedness results for the 2D Boussinesq equations, including the cases with the viscosity and the thermal diffusivity:
\begin{align}\label{2EB-0}
\begin{cases}
\partial_t u + u \cdot \nabla u -\nu \Delta u +\nabla P= \theta e_2, &\quad (t,\mathbf{x})\in \R^+\times\R^2,\\
\partial_t \theta + u \cdot \nabla \theta - \kappa \Delta \theta=0, &\quad (t,\mathbf{x})\in \R^+\times\R^2,\\
\div\ u = 0, &\quad (t,\mathbf{x})\in \R^+\times\R^2,\\
(u,\theta)(0,\mathbf{x})=(u_0,\theta_0)(\mathbf{x}), &\quad \mathbf{x}\in \R^2.
\end{cases}
\end{align}

\begin{itemize}
  \item  In the full viscous case $(\nu > 0,\kappa > 0)$, it is known that the 2D Boussinesq system \eqref{2EB-0} is globally well-posed (see \cite{nian1,nian2}).
\end{itemize}
\begin{itemize}
  \item In the partially viscous case, Chae \cite{Chae} proved the global regularity of \eqref{2EB-0} with either the zero diffusivity (\( \nu > 0 \), \( \kappa = 0 \)) or the zero viscosity (\( \nu = 0 \), \( \kappa > 0 \)) in $H^m$ for $m > 2$ (see also \cite{houl}). Hmidi and Keraani \cite{hk} focused on the case \( \nu = 0 \) and \( \kappa > 0 \), and proved the global existence and uniqueness of \eqref{2EB-0} for initial data $u_0\in B_{p,1}^{1+2/p}$ and $\theta_0\in B_{p,1}^{-1+2/p}\cap L^q$ with $q>2$.
\end{itemize}
\begin{itemize}
  \item In the inviscid case (\(\nu = \kappa = 0\)), \eqref{2EB-0} reduces to \eqref{2EB}. There have been extensive studies but only involving the local-in-time theory for \eqref{2EB}. Chae and Nam \cite{chaeN} proved a local well-posedness (existence and uniqueness) result and also a blow-up criterion for the system \eqref{2EB} in Sobolev spaces $W^{s,2}(\R^2)$ with $s > 2$. In the context of Besov spaces, Liu, Wang and Zhang \cite{lwz} established the local well-posedness
of solutions to \eqref{2EB} and a blow-up criterion in critical space \( B_{p,1}^{1+2/p}(\mathbb{R}^2) \) with $ 1 <p <\infty$. Yuan \cite{yb} proved a result of local existence and uniqueness of solutions to \eqref{2EB} in $B^s_{p,q}(\R^2)$ with $s\geq 1+2/p, 1 < p < \infty$ and $1 \leq q \leq \infty$, where $q= 1$ if $s= 1+2/p$, as well as a blow-up criterion.
\end{itemize}

However, we notice that, there have few local well-posedness result for the 2D inviscid Boussinesq equations \eqref{2EB} in the supercritical Besov spaces $B^s_{p,q}(\R^2)$ with $s<1+2/p$. Inspired by the recent work by Luo \cite{luo1d} who proved that the 3D Euler equations are strongly ill-posed in supercritical Sobolev spaces $H^s$ for any
$0 < s <5/2$ by constructing an initial velocity field with arbitrarily small $H^s$ norm for which the unique local-in-time smooth solution of the 3D Euler equation develops large $\dot{H}^s$ norm inflation almost instantaneously (see \cite{luo2} for 3D Navier-Stokes equations), we will prove that the 2D Boussinesq system \eqref{2EB} is strong ill-posed in the supercritical Besov spaces $B^s_{p,q}(\R^2)$.
Our main result is the following.

\begin{theorem}\label{th1}
Assume that $p\in(1,\infty)$ and $s\in(0,1+\fr2p)$. For any $\ep> 0$, there exists a solution $(u(t),\theta(t))\in \mathcal{C}([0,T_0]; B^\infty_{p,1})$ of the Cauchy problem \eqref{2EB} and $0< T_0 < \ep$ such that $(u_0,\theta_0) \in {\cal S}(\R^2)$ satisfying
$$
\|u_0,\theta_0\|_{B^s_{p,1}}\leq \ep \quad\text{but}
\quad \|\theta(T_0)\|_{B^s_{p,\infty}} > \frac{1}{\ep}.$$
\end{theorem}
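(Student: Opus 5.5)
Following Luo's strategy for 3D Euler, the plan is to build a small-norm initial datum in which a fixed, regular, time-independent background flow stretches a highly oscillating, small-amplitude temperature perturbation, and then to show that the true solution tracks this linearized picture on a short time interval. The inflation will come from the transport equation $\pa_t\theta+u\cdot\na\theta=0$ alone. Along a hyperbolic (strain) flow, the frequency of $\theta$ grows exponentially in time while its $L^p$ norm is conserved. So $\|\theta(t)\|_{B^s_{p,\infty}}$ grows like $e^{sAt}\|\theta_0\|_{B^s_{p,\infty}}$ when the strain rate is $A$. To get growth within time $T_0<\ep$ we need a large strain $A$, but the velocity must still be small in $B^s_{p,1}$. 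This is possible exactly because $s<1+2/p$: a velocity field concentrated at scale $\lambda^{-1}$ with $\|\na u\|_{L^\infty}\sim A$ has $\|u\|_{B^s_{p,1}}\sim A\lambda^{-1}\lambda^{s-2/p}=A\lambda^{s-1-2/p}$. Choosing $\lambda\to\infty$ with $A=\lambda^{\delta}$ makes this tiny, since $s-1-2/p<0$.

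\textbf{Construction.} Take $u_0=A\lambda^{-1}\nabla^\perp\big(\phi(\lambda x)\, x_1x_2\lambda^2\big)$, normalized so that near the origin, on the ball $|x|\lesssim\lambda^{-1}$, $u_0=A(x_1,-x_2)$ and $\na u_0$ has size $A$. Take $\theta_0=\eta\, N^{-s}\lambda^{2/p}\psi(\lambda x)\cos(N x_1)$, a tiny-amplitude wave packet supported in the hyperbolic region, with $N\gg\lambda$. Then $\|\theta_0\|_{B^s_{p,1}}\approx\eta$. We fix $\eta=\ep/2$ and the parameters $\lambda,A,N$ so that $\|u_0,\theta_0\|_{B^s_{p,1}}\le\ep$. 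Both functions are Schwartz, and by the local theory of Chae--Nam/Yuan (data in $B^\infty_{p,1}$) a smooth solution exists on some interval. Since $\theta_0$ is extremely small in high norms relative to $u_0$, the lifespan is at least of order $A^{-1}\log(\cdot)$ as long as $u$ stays close to $u_0$.

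\textbf{Key steps.} First, over times $t\le T_0:=cA^{-1}\log(1/\ep)\ll\ep$, control $u-u_0$. The velocity evolves by Euler forced by $\theta e_2$. The forcing is small in $L^\infty_tW^{1,\infty}$ after integration, since $\|\theta\|_{L^\infty}$ is conserved and tiny and one uses the $B^{-1}$-type smallness of the oscillating packet. The Euler self-interaction of $u_0$ over time $\sim A^{-1}\log$ is controlled because the vorticity $\omega_0$ has size $A$ and the deformation in that time is bounded. So $\|\na u(t)-\na u_0\|_{L^\infty(B_{c\lambda^{-1}})}\le A/10$. Second, compute the flow map $X(t)$ of $u$ near the origin: it compresses in $x_1$ by roughly $e^{-At}$. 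Hence $\theta(t)=\theta_0\circ X^{-1}(t)$ oscillates at frequency $\sim Ne^{At}$ with the same $L^p$ mass, up to Jacobian-one distortions. Third, lower-bound the norm by estimating a single Littlewood--Paley block: $\|\Delta_j\theta(T_0)\|_{L^p}\gtrsim\|\theta_0\|_{L^p}$ for $2^j\sim Ne^{AT_0}$. This gives $\|\theta(T_0)\|_{B^s_{p,\infty}}\gtrsim\eta e^{sAT_0}>1/\ep$ once $T_0$ is chosen with $e^{sAT_0}\gtrsim\ep^{-2}$, which is consistent with $T_0<\ep$ by taking $A$ large.

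\textbf{Main obstacle.} The hardest step is the third one, showing that the composed wave packet really concentrates in one dyadic shell at frequency $Ne^{AT_0}$, given that the flow map is only approximately linear. I would handle it by writing $X^{-1}(t,x)=(e^{At}x_1,e^{-At}x_2)+r(t,x)$. Then I would bound $N\|\na r\|_{L^\infty}$ and the higher derivatives so that the phase $N(X^{-1})_1$ has gradient comparable to $Ne^{At}$ with controlled second derivatives. A non-stationary phase / integration-by-parts argument then shows that the low- and high-frequency parts outside the shell are $o(\|\theta_0\|_{L^p})$. This requires $N$ large relative to $\lambda e^{AT_0}$ so that the packet envelope is essentially constant on the oscillation scale, and it requires the smallness of $\theta$'s feedback on $u$ in $C^2$ on the support.
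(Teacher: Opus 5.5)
\textbf{The main gap is your Step 1.} The background $u_0=A\lambda^{-1}V_0(\lambda x)$, with $V_0=\nabla^\perp(\phi(y)y_1y_2)$, is not a steady Euler flow. Its vorticity $\Delta(\phi(y)y_1y_2)$ lives in the transition annulus and is not a function of the stream function.

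By the scaling symmetry of 2D Euler, the unforced evolution is exactly $A\lambda^{-1}V(At,\lambda x)$, where $V$ is the Euler solution from $V_0$. Hence $\nabla u(t,x)=A(\nabla V)(At,\lambda x)$ up to the buoyancy correction. You need $e^{sAT_0}\gtrsim\ep^{-2}$, i.e. $AT_0\ge\frac2s\log\frac1\ep$. So your bound $\|\nabla u(t)-\nabla u_0\|_{L^\infty(B_{c\lambda^{-1}})}\le A/10$ amounts to $\|\nabla V(\tau)-\nabla V_0\|_{L^\infty(B_c)}\le\frac1{10}$ for a fixed, parameter-free, non-stationary Euler solution up to $\tau\sim\frac2s\log\frac1\ep\gg1$.

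No choice of $\lambda,A,N$ helps, because the self-interaction runs on the same clock $A^{-1}$ as the stretching. The vorticity of $V_0$ moves an $O(1)$ distance in $O(1)$ rescaled time and generically changes the strain in the core by $O(1)$. ``Bounded deformation'' only gives a bound of the form $e^{C\tau}$, not closeness. Nothing in your argument even keeps the strain at the origin bounded below for $\log(1/\ep)$ e-folding times.

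The paper avoids exactly this. Its background $U=\delta_n\nabla^\perp f(nr)$ is a radial vortex, an exact stationary solution ($U\cdot\nabla U=-\nabla P_0$), so the only error source is the buoyancy term $\Theta e_2$. That term is harmless over the polynomially short time $T_n$, even though the flow acts for $n\delta_nT_n=\ep_n^{-2/s}$ turnover times. The growth there comes from differential rotation, $\Theta=\delta_nh(nr)\sin(\alpha-\delta_nt/r)$, i.e. linear rather than exponential growth of the gradient. To rescue your route you would need a background that is steady, or provably nearly steady for $\log(1/\ep)$ e-folding times, with a persistent hyperbolic point. The simplest fix is to adopt an exact steady state such as the radial vortex and use shear instead of strain.

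\textbf{Step 3 has a second, independent problem.} Your packet $\psi(\lambda x)$ has the same scale $\lambda^{-1}$ as the hyperbolic core. Even under a frozen linear strain, it is stretched by $e^{AT_0}\sim\ep^{-2/s}$ along the outgoing axis and leaves $B_{c\lambda^{-1}}$. Only initial points with $|y_2|\lesssim\lambda^{-1}e^{-AT_0}$ stay in the core, and they carry just an $e^{-AT_0/p}$ fraction of $\|\theta_0\|_{L^p}$.

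So the claim $\|\Delta_j\theta(T_0)\|_{L^p}\gtrsim\|\theta_0\|_{L^p}$ for $2^j\sim Ne^{AT_0}$ does not follow. The part that stays contributes only $\eta e^{(s-1/p)AT_0}$, and your argument says nothing about the part that has left. For $s\le 1/p$ you therefore get no inflation.

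This one is repairable. Concentrate $\theta_0$ at scale $\mu^{-1}$ with $\lambda e^{AT_0}\ll\mu\ll N$ and amplitude $\eta N^{-s}\mu^{2/p}$, so the whole packet stays in the core.

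Also fix the normalization. As written, with $\nabla^\perp=(-\partial_2,\partial_1)$, $u_0$ equals $A\lambda(-x_1,x_2)$ near the origin, not $A(x_1,-x_2)$. The latter would lower, not raise, the frequency of the $x_1$-oscillation of $\theta_0$.
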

Notice that the fact $B^s_{p,1} \hookrightarrow B^s_{p,q} ( W^{s,p} )\hookrightarrow B^s_{p,\infty}$ with $s>0$ and $(p,q)\in(1,\infty)\times [1,\infty]$, we have
\begin{corollary}\label{cor1}
The Boussinesq system \eqref{2EB} is strong ill-posed in both $B^s_{p,q}(\R^2)$ and $W^{s,p}(\R^2)$ for any $s\in(0,1+2/p)$ and $(p,q)\in(1,\infty)\times [1,\infty]$ in the sense that the data-to-solution map is strong instability.
\end{corollary}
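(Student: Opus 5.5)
The corollary follows from Theorem \ref{th1} and the continuous embeddings
$$B^s_{p,1}\hookrightarrow B^s_{p,q}\hookrightarrow B^s_{p,\infty},\qquad B^s_{p,1}\hookrightarrow W^{s,p}\hookrightarrow B^s_{p,\infty},$$
valid for $s>0$ and $(p,q)\in(1,\infty)\times[1,\infty]$. We first make precise what \emph{strong ill-posedness} means. For every $\ep>0$ there should exist Schwartz initial data $(u_0,\theta_0)$ with $\|u_0,\theta_0\|_X\le\ep$ and a smooth solution on $[0,T_0]$ with $T_0<\ep$ such that $\|\theta(T_0)\|_X>1/\ep$. Here $X$ is either $B^s_{p,q}$ or $W^{s,p}$. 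In particular, the data-to-solution map cannot be continuous at the zero state, even in the weak sense of mapping small balls into bounded sets for short times.

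The argument applies the embeddings at both ends of the inequality. Fix $X$ and let $C\ge1$ be a common constant with
$$\|f\|_{B^s_{p,\infty}}\le C\|f\|_X\le C^2\|f\|_{B^s_{p,1}}.$$
Given $\ep>0$, set $\ep'=\ep/C^2\le \ep$ and apply Theorem \ref{th1} with $\ep'$. This produces $T_0<\ep'\le\ep$ and Schwartz data with $\|u_0,\theta_0\|_{B^s_{p,1}}\le\ep'$, together with a solution $(u,\theta)\in\mathcal C([0,T_0];B^\infty_{p,1})$ satisfying $\|\theta(T_0)\|_{B^s_{p,\infty}}>1/\ep'$.

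The small end of the estimate gives
$$\|u_0,\theta_0\|_X\le C\|u_0,\theta_0\|_{B^s_{p,1}}\le C\ep'\le\ep.$$
The large end gives
$$\|\theta(T_0)\|_X\ge C^{-1}\|\theta(T_0)\|_{B^s_{p,\infty}}>\frac{1}{C\ep'}=\frac{C}{\ep}\ge\frac1\ep.$$
The solution is smooth, since $B^\infty_{p,1}$-valued, so it lies in every $X$ and is the unique classical solution given by the local theory in high regularity (e.g. \cite{yb,lwz}). Therefore the norm inflation is a genuine instability of the data-to-solution map rather than an artifact of non-uniqueness. Since $\ep$ is arbitrary, this proves the claimed strong instability in both scales.

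The only point requiring care is the embedding $W^{s,p}\hookrightarrow B^s_{p,\infty}$ together with $B^s_{p,1}\hookrightarrow W^{s,p}$ for $1<p<\infty$ and $s>0$. For non-integer $s$ these hold because $W^{s,p}=B^s_{p,p}$ in the Slobodeckij sense. For integer $s$ they follow from $B^s_{p,\min(p,2)}\hookrightarrow W^{s,p}\hookrightarrow B^s_{p,\max(p,2)}$, via Littlewood--Paley theory. In both cases $W^{s,p}$ sits between $B^s_{p,1}$ and $B^s_{p,\infty}$, which is all the argument uses. Everything else is bookkeeping of constants.
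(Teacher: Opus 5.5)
Your argument is correct and follows the paper's route: Theorem \ref{th1} combined with the sandwich $B^s_{p,1}\hookrightarrow X\hookrightarrow B^s_{p,\infty}$ for $X=B^s_{p,q}$ or $W^{s,p}$, with your choice $\ep'=\ep/C^2$ simply making explicit the constants the paper leaves implicit. One minor point: the paper defines $W^{s,p}$ for real $s$ via Bessel potentials, so for non-integer $s$ it is the Triebel--Lizorkin space $F^s_{p,2}$ rather than the Slobodeckij space $B^s_{p,p}$; your Littlewood--Paley sandwich $B^s_{p,\min(p,2)}\hookrightarrow W^{s,p}\hookrightarrow B^s_{p,\max(p,2)}$ holds for all $s>0$ under that definition, so the conclusion is unaffected.
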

\begin{remark}
We should mention that, our ill-posedness result holds within the framework of Besov spaces which covers the Sobolev spaces $H^s(\R^2)$ with $s\in(0,2)$ since $B^s_{2,2}=H^s$.
Apart from the result in $W^{1,p}(\mathbb{T}^3)$ of DiPerna-Lions \cite{DM}, very little was known on the classical Euler equation in the supercritical $W^{s,p}$ with $s<1+d/p$, while Corollary \ref{cor1} is somewhat surprising. In particular, our result shows the failure of the boundedness of the data-to-solution map in $W^{1,p}(\R^2)$ for any $1<p<\infty$. More precisely, there is no continuous function $\tau\mapsto f(\tau)$ such that any smooth solution of the Cauchy problem \eqref{2EB} satisfies the estimate
\begin{align*}
\|\theta(\cdot,t)\|_{W^{1,p}}\leq f(\|u_0,\theta_0\|_{W^{1,p}}).
\end{align*}
Our new observation is that, the instability mechanism is driven by the $\theta$-equation, which is not possible for the Euler equation since the conservation of $L^p$-norm of voticity, i.e. $\|\omega\|_{L^p}=\|\omega_0\|_{L^p}$ for $p\in[1,\infty]$.
\end{remark}

The hydrostatic balance plays many important roles in geophysics and astrophysics \cite{ped,wit}. Understanding the stability of perturbations near the hydrostatic balance is important both mathematically and physically.
In this paper, we take the hydrostatic equilibrium
\begin{align}\label{pht}
u_{\text{eq}} = \mathbf{0}, \quad \theta_{\text{eq}} = \beta y, \quad P_{\text{eq}} = \frac{1}{2} \beta y^2,
\end{align}
where $\beta\neq0$ and introduce the perturbation
\[\tilde{u}=u-u_{\text{eq}}, \quad \tilde{\theta}=\theta - \theta_{\text{eq}}, \quad \tilde{P} = P - P_{\text{eq}},\]
We consider the time evolution of the perturbations around a stable state in hydrostatic balance, and then \((\tilde{u}, \tilde{\theta}, \tilde{P})\) should satisfy
\begin{align*}
\begin{cases}
\partial_t \tilde{u} + \tilde{u}\cdot \nabla \tilde{u} +\nabla \tilde{P}= \tilde{\theta} e_2, \\
\partial_t \tilde{\theta} + \tilde{u} \cdot \nabla\tilde{\theta} = -\beta \tilde{u}_2, \\
\div\ \tilde{u}= 0, \\
\tilde{u}(0, \mathbf{x}) =u_0(\mathbf{x}), \quad \tilde{\theta}(0, \mathbf{x}) =\theta_0(\mathbf{x}) - \beta y.
\end{cases}
\end{align*}
\begin{corollary}\label{cor2}
The 2D Boussinesq equations around the hydrostatic equilibrium \eqref{pht} for $\beta\neq0$ are strongly ill-posed in both $B^s_{p,q}(\R^2)$ and $W^{s,p}(\R^2)$ for any $s\in(0,1+2/p)$ and $(p,q)\in(1,\infty)\times [1,\infty]$.
\end{corollary}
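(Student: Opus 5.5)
The idea is to reduce Corollary \ref{cor2} to Theorem \ref{th1}. Write $\Theta=\tilde\theta$ for the perturbation. If $(u,\theta)$ solves \eqref{2EB}, then $\theta=\tilde\theta+\beta y$, and the pair $(\tilde u,\tilde\theta)=(u,\theta-\beta y)$ solves the perturbed system. Conversely, any solution of the perturbed system gives a solution of \eqref{2EB}. So we want initial data $(\tilde u_0,\tilde\theta_0)\in\mathcal S$ that are small in $B^s_{p,1}$ and whose perturbation $\tilde\theta$ inflates.

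The obstacle is that Theorem \ref{th1} only produces Schwartz data for \eqref{2EB}. The corresponding $\theta_0=\tilde\theta_0+\beta y$ is not Schwartz, so we cannot invoke the theorem verbatim. We therefore rerun the construction behind Theorem \ref{th1} with the background stratification added.

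The natural choice is to keep the same localized data and track the extra linear term $-\beta\tilde u_2$ in the $\tilde\theta$-equation. Writing the equation along the flow map $X(t,\cdot)$ of $\tilde u$ gives
\[\tilde\theta(t,X(t,\mathbf{x}))=\tilde\theta_0(\mathbf{x})-\beta\int_0^t \tilde u_2(\tau,X(\tau,\mathbf{x}))\,\dd\tau.\]
The norm inflation in Theorem \ref{th1} comes from transport by a strongly deforming flow. We expect it to be produced on a time scale $T_0<\ep$, by a velocity that stays bounded in a norm such as $B^{1+2/p}_{p,1}$ or Lipschitz with a logarithmic cost.

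Next we estimate the extra term. It satisfies
\[\Big\|\beta\int_0^t\tilde u_2\circ X\,\dd\tau\Big\|_{B^s_{p,\infty}}\lesssim |\beta|\,T_0\,\sup_{\tau\le T_0}\|\tilde u(\tau)\|_{B^s_{p,\infty}}\,e^{C\int_0^{T_0}\|\nabla \tilde u\|_{L^\infty}}.\]
Since $\tilde u$ is small in $B^s_{p,1}$ initially and the construction controls $\tilde u$ on $[0,T_0]$, this contribution is $O(|\beta|T_0)$ times bounded quantities, hence $\le 1$.

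We must also check that the velocity equation is not harmed. The forcing $\tilde\theta e_2$ is the same kind of term as in \eqref{2EB}, with $\tilde\theta$ remaining a bounded perturbation, and the stratification enters $\tilde u$ only through $\tilde\theta$. So the a priori bounds used in the proof of Theorem \ref{th1} persist, with constants depending on $\beta$, after shrinking $T_0$.

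Finally, the leading part $\tilde\theta_0\circ X^{-1}(t)$ obeys the same lower bound $>2/\ep$ as in Theorem \ref{th1}. Subtracting the bounded correction gives $\|\tilde\theta(T_0)\|_{B^s_{p,\infty}}>1/\ep$. The embeddings $B^s_{p,1}\hookrightarrow B^s_{p,q},W^{s,p}\hookrightarrow B^s_{p,\infty}$ then yield the corollary exactly as Corollary \ref{cor1} follows from Theorem \ref{th1}.

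The main step I expect to be delicate is making sure that the bound on $\tilde u$ used in the construction survives the modification. Since the inflating quantity is $\tilde\theta$, any growth of $\tilde\theta$ feeds back into $\tilde u$ through the buoyancy term. I would handle this as in Theorem \ref{th1}, by using that the $L^p$ norm of $\tilde\theta$ is controlled and the time interval is short. The only change is an added term $|\beta| t\|\tilde u\|_{L^p}$ in the $L^p$ bound, which is harmless by Gronwall.
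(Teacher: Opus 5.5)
You are right that Theorem \ref{th1} cannot be quoted verbatim, since the full datum $\tilde\theta_0+\beta y$ is not even in $L^p$. You are also right that the construction of Section \ref{sec1} has to be rerun with the extra term $-\beta\tilde u_2$. The paper gives no separate proof of Corollary \ref{cor2}, and this rerun is what it implicitly relies on.

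The gap is in why that term is harmless. Your premise that the velocity ``stays bounded in $B^{1+2/p}_{p,1}$'' is false: $\|U\|_{B^{1+2/p}_{p,1}}\approx\varepsilon_n n^{1+2/p-s}\to\infty$. Only its time integral over $[0,T_n]$, which is $\approx\varepsilon_n^{-2/s}$, is under control. Consequently the exponential factor in your display is not a bounded quantity, and it cannot be one. Apply the same estimate with zero source to the leading part: it bounds $\|\tilde\theta_0\circ X^{-1}(T_0)\|_{B^s_{p,\infty}}$ by that very factor times $\|\tilde\theta_0\|_{B^s_{p,\infty}}\lesssim\varepsilon$. Inflation to $2/\varepsilon$ therefore forces the factor to be $\gtrsim\varepsilon^{-2}$. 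With $\sup_\tau\|\tilde u(\tau)\|_{B^s_{p,\infty}}\approx\varepsilon$ as in the construction, your display then gives at best a bound of order $|\beta|T_0\varepsilon^{-1}$. The fact that $T_0<\varepsilon$ does not make this $\le 1$.

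What closes the argument is the separation of scales built into the parameters. The time $T_n=n^{s-1-2/p}\varepsilon_n^{-1-2/s}$ is polynomially small in $n$. By contrast, every amplification factor is at most $e^{C\varepsilon_n^{-\frac4s(1+\frac1p)}}\le C\log n$, and $\varepsilon_n$ is only a power of $1/\log\log n$. Hence your correction is $\lesssim|\beta|T_n(\varepsilon_n+\Gamma_n)\log n\to0$ for $n\ge n_0(\beta)$. Taking $n$ large, not ``shrinking $T_0$'', is the right adjustment, since inflation needs $n\delta_n t\gtrsim\varepsilon_n^{-2/s}$, i.e.\ $t\gtrsim T_n$.

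The same check must be made in every norm of the bootstrap of Proposition \ref{pro3}, not only in $L^p$. Keep $(U,\Theta)$ and set $v=\tilde u-U$; then the equation for $\tilde\theta-\Theta$ acquires the source $-\beta(U_2+v_2)$. The $v_2$ part is absorbed by Gronwall. The $U_2$ part is a genuine source, not Gronwall-absorbable. It contributes $|\beta|T_n\varepsilon_n n^{-s}$, $|\beta|T_n\varepsilon_n n^{2/p-s}$ and $|\beta|\varepsilon_n^{-2/s}$ in $L^p$, $B^{2/p}_{p,1}$ and $B^{1+2/p}_{p,1}$ respectively. Each stays below the corresponding threshold even after multiplication by $\log n$.

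This direct treatment also makes your Lagrangian splitting unnecessary. If you keep it, note that bounding $\tilde u_2\circ X$ in $B^s_{p,\infty}$ through Lipschitz bounds on $X$ alone only works for $s<1$. For $s\in[1,1+2/p)$, apply the Eulerian transport estimate used in Proposition \ref{pro3} to $\gamma$ solving $\partial_t\gamma+\tilde u\cdot\nabla\gamma=-\beta\tilde u_2$, $\gamma(0)=0$. Its exponent $\int_0^t\|\nabla\tilde u\|_{B^{2/p}_{p,\infty}\cap L^\infty}\,\mathrm{d}\tau$ is controlled by the bootstrap.
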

\begin{remark}
Bianchini et.al. \cite{BHI} considered the perturbations of the simplest stably stratified density profile, namely \({\theta}_{\text{eq}}(y) = - y\), and proved that the 2D Boussinesq equations around the stably stratified steady state in vorticity form are strongly ill-posed in $L^{\infty}(\R^2)$ (see \cite{E-ARMA} for mild ill-posedness result). Compared with the ill-posedness result in $W^{1,\infty}(\R^2)$, Corollary \ref{cor2} holds in $W^{1,p}(\R^2)$ for any $1<p<\infty$.
\end{remark}
{\bf Idea of the proof.}\,Let us give an overview of the main ideas leading to our results.

First, we construct the approximate system by neglecting the linear term of the original system
\begin{align}\label{app0}
\begin{cases}
\pa_t U+U\cdot \nabla U+\nabla \bar{P}=0, \\
\pa_t \Theta+U\cdot \nabla \Theta=0,\\
\div\ U=0,\\
(U,\Theta)(0,\mathbf{x})=(u_0,\theta_0)(\mathbf{x}).
\end{cases}
\end{align}
Choosing special initial data $(u_0,\theta_0)$ which inspired by \cite{luo1d} for the Euler equations, we find a global solution $(U,\Theta)$ of \eqref{app0} which we call it the good approximate solution for \eqref{2EB}.

Second, we show that the approximate solution  $(U,\Theta)$ exhibit norm inflation instantaneously in timescale $T\approx\frac{\ep^{-\alpha}}{\|u_0,\theta_0\|_{W^{1,\infty}}}$ for small $\ep>0$ and some $\alpha>0$.

Last, we perform perturbation analysis to show that the difference $(u-U,\theta-\Theta)$ between the real solution and the approximation solution is controlled up to the norm inflation time.

\section{Preliminaries}\label{sec2}
\subsection{Notations}
For the sake of simplicity, we set $r = |\mathbf{x}|=\sqrt{x^2 + y^2}$ with $\mathbf{x}=(x,y)\in\R^2$. We will also use the polar coordinate $\mathbf{x}=(r\cos\alpha,r\sin\alpha)$ with $\alpha=\arctan\frac{y}{x}$ and denote the polar coordinate unit basis vectors
$\vec{e}_r = (\cos \alpha, \sin \alpha), \ \vec{e}_\alpha = (-\sin \alpha, \cos \alpha).$
The vector or metric $\nabla f$ denotes the gradient of $f$ with respect to the space variable, whose $(i,j)$-th component is given by $(\nabla u)_{ij}=\pa_iu_j$ with $1\leq i,j\leq 2$. Throughout the paper, for \( k \in \mathbb{N} \), \( \nabla^k \) refers to the full gradient in \( \mathbb{R}^2 \). For a vector- or tensor-valued function \( f \), its modulus \( |f| \) denotes the square root of the sum of squares of each component.
Throughout this paper, $C$ stands for some positive constant independent of $n$, which may vary from line to line.
The symbol $A\approx B$ means that $C^{-1}B\leq A\leq CB$.
Given a Banach space $X$, we denote its norm by $\|\cdot\|_{X}$. We also use the simplified notation $\|f_1,\cdots,f_n\|_{X}:=\|f_1\|_{X}+\cdots+\|f_n\|_{X}$.
For $I\subset\R$, we denote by $\mathcal{C}(I;X)$ the set of continuous functions on $I$ with values in $X$.
We use $\mathcal{S}(\R^2)$ and $\mathcal{S}'(\R^2)$ to denote Schwartz functions and the tempered distributions spaces on $\R^2$, respectively.
\subsection{Functional Spaces}
Next, we will recall some facts about the Littlewood-Paley decomposition and the nonhomogeneous Besov spaces (see \cite{BCD} for more details).
Choose a radial, non-negative, smooth function $\vartheta:\R^d\mapsto [0,1]$ such that ${\rm{supp}} \;\vartheta\subset B(0, 4/3)$ and $\vartheta(\xi)\equiv1$ for $|\xi|\leq3/4$.
Setting $\varphi(\xi):=\vartheta(\xi/2)-\vartheta(\xi)$, then we deduce that $\varphi$ satisfies
${\rm{supp}} \ \varphi\subset \left\{\xi\in \R^d: 3/4\leq|\xi|\leq8/3\right\}$ and $\varphi(\xi)\equiv 1$ for $4/3\leq |\xi|\leq 3/2$.
The nonhomogeneous dyadic blocks are defined as follows
\begin{align*}
\forall\, u\in \mathcal{S'}(\R^d),\quad \Delta_ju=0,\; \text{if}\; j\leq-2;\quad
\Delta_{-1}u=\vartheta(D)u;\quad
\Delta_ju=\varphi(2^{-j}D)u,\; \; \text{if}\;j\geq0.
\end{align*}

{\bf Besov spaces.}\,
Let $s\in\mathbb{R}$ and $(p,r)\in[1, \infty]^2$. The nonhomogeneous and homogeneous Besov spaces are defined, respectively,
$$
B^{s}_{p,q}:=\f\{f\in \mathcal{S}':\;\|f\|_{B^{s}_{p,q}}:=\left\|2^{js}\|\Delta_jf\|_{L_x^p}\right\|_{\ell^q(j\geq-1)}<\infty\g\}
$$
and
$$
\dot{B}^{s}_{p,q}:=\f\{f\in \mathcal{S}'_h:\;\|f\|_{\dot{B}^{s}_{p,q}}:=\left\|2^{js}\|\dot{\Delta}_jf\|_{L_x^p}\right\|_{\ell^q(j\in \mathbb{Z})}<\infty\g\},
$$
where \( \ell^\infty \) is used for \( q = \infty \), i.e., the supremum over \( q \).

{\bf Sobolev spaces.}\,
We recall the definition of Sobolev spaces. For any integer \( k \in \mathbb{N} \) and $1 \le p \le \infty$:
\begin{equation*}
\|f\|_{W^{k,p}} =\|f\|_{L^{p}}+ \|f\|_{\dot{W}^{k,p}}\quad\text{with}\quad \|f\|_{\dot{W}^{k,p}} = \|\nabla^k f\|_{L^p}.
\end{equation*}
For any real \( s \in \mathbb{R} \) and \( 1 < p < \infty \):
\begin{equation*}
\|f\|_{W^{s,p}} = \|(\mathrm{Id}-\Delta)^{s/2} f\|_{L^p}\quad\text{with}\quad
 \|f\|_{\dot{W}^{s,p}} = \|(-\Delta)^{s/2} f\|_{L^p}.
\end{equation*}
It is well known that, the two definitions coincide when \( 1 < p < \infty \). We emphasize that when \( p = \infty \), we only use the definition of integer Sobolev spaces.
When \( p = 2 \), we denote \( H^s := W^{s,2} \) and $\dot{H}^s:=\dot{W}^{s,2}$. For any $s>0$ and $(p,r)\in[1, \infty]^2$, then $B^{s}_{p,r}=\dot{B}^{s}_{p,r}\cap L^p$ and
$\|f\|_{B^{s}_{p,r}}\approx \|f\|_{L^{p}}+\|f\|_{\dot{B}^{s}_{p,r}}.$

\section{Proof of Theorem \ref{th1}}\label{sec1}

In this section, we construct the approximate solution that exhibits norm inflation in $B^s_{p,q}$ and then prove Theorem \ref{th1}. We divide the proof into six steps.

{\bf Choice of Parameters}. From now on we let $s\in(0,1+2/p)$ and $p\in(1,\infty)$. Throughout this paper, the parameters are chosen as follows:
\begin{align*}
&n\in \f\{2^j,j\in \mathbb{N}\g\}, \quad n\gg 1,\\
&\varepsilon_n=\f(\frac{1}{\log\log n}\g)^{\fr{sp}{8(1+p)}}, \\
&\delta_n=\varepsilon_nn^{\fr{2}{p}-s},\\
&\Gamma_n=\frac{1}{\log n},\\
&T_n=n^{s-1-\fr{2}{p}}\ep_n^{-(1+\fr2s)}.
\end{align*}
With the above, it holds that the simple facts which will be used in the sequel
$$e^{C_1\varepsilon_n^{-\frac{4}{s}(1+\fr1p)}}=e^{C_1\sqrt{\log\log n}}\leq C_2\log n
$$
for some universal positive constants $C_1$ and $C_2$. Also,
$$n\delta_nT_n=\ep_n^{-\fr2s}\gg1\quad\Leftrightarrow\quad (n\delta_nT_n)^s=\ep_n^{-2}.$$
\subsection{Construction of initial data}\label{subsec1}
Let $f(r)$ and $h(r)$ be radial functions satisfying $f, h\in C_c^\infty (\R)$ and
\bbal
&\mathrm{supp} \ h\subset \f\{1 \leq r \leq \frac32\g\}, \quad \mathrm{supp} \ f\subset \f\{\frac12 \leq r \leq 2\g\}\\
&h(r)=1 \quad \mathrm{for} \quad  \f\{\frac98 \leq r \leq \frac{11}{8}\g\},
\\
&f'(r)=1 \quad \mathrm{for} \quad  \f\{1 \leq r \leq \frac32\g\}.
\end{align*}
We define the initial data $u_0=(u_{0,1},u_{0,2})$ and $\theta_0$ as follows:
\bbal
&u_{0,1}=-\delta_n\pa_y[f(nr)]=-\delta_nf'(nr)\frac{y}{r},
\\&u_{0,2}=\delta_n\pa_x[f(nr)]=\delta_nf'(nr)\frac{x}{r},
\\&\theta_0=\delta_nh(nr)\sin\f(\arctan\frac{y}{x}\g)=\delta_nh(nr)\frac{y}{r}.
\end{align*}
Obviously, $\div\ u_0=0$.

\subsection{Estimation of initial data}\label{subsec2}
 To estimate the $B^s_{p,q}$-norm of $(u_{0},\theta_0)$, we need to establish the following Lemma.
\begin{lemma}\label{lem0}
Let $f\in C^\infty_0(\R)$ be given above, then for $g=f'(nr)\frac{y}{r}$ or $g=f'(nr)\frac{x}{r}$, we have
\bbal
\|g\|_{B^s_{p,q}}\approx n^{s-\fr{2}{p}}.
\end{align*}
\end{lemma}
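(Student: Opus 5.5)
The function $g$ is a fixed Schwartz profile dilated by $n$. Put $G(\mathbf{x}):=f'(|\mathbf{x}|)\frac{y}{|\mathbf{x}|}$ (or with $x$ in place of $y$), so that $g(\mathbf{x})=G(n\mathbf{x})$. Since $f'$ is smooth and supported in $\{1/2\le r\le 2\}$, the angular factor $y/r$ is smooth there. Hence $G\in C_c^\infty(\R^2)$, its support lies in the annulus $\{1/2\le|\mathbf{x}|\le2\}$, and $G\not\equiv0$ because $f'=1$ on $[1,3/2]$. The plan is to reduce $\|g\|_{B^s_{p,q}}$ to homogeneous norms of $G$ via scaling, treating the low-frequency and $L^p$ parts separately.

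\textbf{Step 1.} I will use $\|g\|_{B^s_{p,q}}\approx\|g\|_{L^p}+\|g\|_{\dot B^s_{p,q}}$, valid for $s>0$ as recalled in Section \ref{sec2}. A change of variables gives $\|g\|_{L^p}=n^{-2/p}\|G\|_{L^p}$.

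\textbf{Step 2.} For the homogeneous part I use the dilation identity $\|G(n\cdot)\|_{\dot B^s_{p,q}}=n^{s-2/p}\|G\|_{\dot B^s_{p,q}}$. This holds exactly because $n=2^j$ is a dyadic power. Indeed, $\dot\Delta_k[G(n\cdot)]=(\dot\Delta_{k-j}G)(n\cdot)$, which follows from $\varphi(2^{-k}D)[G(n\cdot)]=[\varphi(2^{-k}nD)G](n\cdot)$. Re-indexing the $\ell^q$ sum then yields the factor $2^{js}n^{-2/p}=n^{s-2/p}$.

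\textbf{Step 3.} It remains to check that $0<\|G\|_{\dot B^s_{p,q}}<\infty$. Positivity is immediate since $G\neq0$ and $G$ is not a polynomial. For finiteness, high frequencies are handled by $\|\dot\Delta_kG\|_{L^p}\lesssim2^{-kM}\|\nabla^MG\|_{L^p}$ for large $M$. Low frequencies ($k\to-\infty$) are handled by $\|\dot\Delta_kG\|_{L^p}\le\|G\|_{L^p}$; the weight $2^{ks}$ with $s>0$ makes this part summable. I expect this step, the low-frequency tail, to be the only delicate point, and it is settled by $s>0$. No moment vanishing is needed.

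\textbf{Step 4.} Combining the steps gives
$$\|g\|_{B^s_{p,q}}\approx n^{-2/p}\|G\|_{L^p}+n^{s-2/p}\|G\|_{\dot B^s_{p,q}}.$$
Since $n\gg1$ and $s>0$, the second term dominates. Therefore $\|g\|_{B^s_{p,q}}\approx n^{s-2/p}$, with constants depending only on $f,s,p,q$.

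Alternatively, one can avoid homogeneous spaces. For $2^k\ll n$, the Fourier transform $\hat g(\xi)=n^{-2}\hat G(\xi/n)$ is essentially negligible near $|\xi|\sim2^k$, while for $2^k\sim n$ the block $\Delta_k g$ has $L^p$ norm $\approx n^{-2/p}$. This gives the same conclusion, but the scaling argument above is cleaner.
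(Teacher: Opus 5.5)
Your proof is correct and follows the same route as the paper: you write $g=G(n\cdot)$, use $B^s_{p,q}=L^p\cap\dot B^s_{p,q}$ for $s>0$, and apply the dilation scaling of the $L^p$ and $\dot B^s_{p,q}$ norms. You also justify points the paper leaves implicit, namely the exact dyadic scaling identity, the bounds $0<\|G\|_{\dot B^s_{p,q}}<\infty$ (with $s>0$ controlling the low frequencies), and the dominance of $n^{s-2/p}$ over $n^{-2/p}$; these are exactly what turn the paper's sketch into a complete argument.
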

\begin{proof}
For simplicity, we prove the case when $g=f'(nr)\frac{y}{r}$. We define
\[
G(\mathbf{x}):=f'(r)\frac{y}{r},\qquad \mathbf{x}=(x, y)\in\mathbb R^2.
\]
Since $f\in C_c^\infty(\fr12,2)$, we have $G\in C_c^\infty(\mathbb R^2)$ and
$g(\mathbf{x})=G(n \mathbf{x}).$
By the scaling properties of homogeneous Besov spaces, we have
\bbal
\|g\|_{\dot{B}^s_{p,q}}=\|G(n\cdot)\|_{\dot{B}^s_{p,q}}=n^{s-\fr{2}{p}}\|G\|_{\dot{B}^s_{p,q}}.
\end{align*}
Similarly we have
\[
\|g\|_{L^p}=n^{-\fr2p}\|G\|_{L^p}\approx n^{-\fr2p},
\]
where we have used the fact $\|G\|_{L^p}\approx 1$. In fact,
it is straightforward to compute
\bbal
\|G\|^p_{L^p}=&\int_0^{2\pi}\int_0^\infty |f'(r)|^p|\sin\eta|^pr\dd r\dd\eta
=\int_0^{2\pi}|\sin\eta|^p\dd\eta\int_0^\infty |f'(r)|^pr\dd r.
\end{align*}
Thus we complete the proof.
\end{proof}
As a direct consequence, we have
\begin{proposition}\label{pro0}
Let $s>0$ and $(p,q)\in(1,\infty)\times [1,\infty]$, we have the estimates
\bbal
&\|u_{0},\theta_0\|_{W^{1,\infty}}\approx \varepsilon_nn^{1+\fr{2}{p}-s},\quad
\|u_{0},\theta_0\|_{B^s_{p,q}}\approx \varepsilon_n.
\end{align*}
\begin{remark} We would like to mention that, our construction $u_0$ will be non-Lipchitz since the $W^{1,\infty}$-norm of $u_0$ will go to infinity as $n\to\infty$, which leads to the loss of regularity of solutions and thus our norm inflation of solutions.
\end{remark}
\end{proposition}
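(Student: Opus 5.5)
The Besov estimate will follow directly from Lemma \ref{lem0}, applied componentwise. Since $u_{0,1}=-\delta_n f'(nr)\frac{y}{r}$ and $u_{0,2}=\delta_n f'(nr)\frac{x}{r}$, that lemma gives $\|u_0\|_{B^s_{p,q}}\approx \delta_n n^{s-\frac2p}=\varepsilon_n$. For $\theta_0=\delta_n h(nr)\frac{y}{r}$, the proof of Lemma \ref{lem0} does not use anything about $f'$ beyond it being a smooth radial profile supported in an annulus. So I will rerun it with $H(\mathbf{x}):=h(r)\frac{y}{r}\in C_c^\infty(\R^2)$, supported in $\{1\le r\le 3/2\}$ and not identically zero.

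The homogeneous scaling identity $\|H(n\cdot)\|_{\dot B^s_{p,q}}=n^{s-\frac2p}\|H\|_{\dot B^s_{p,q}}$ (exact for $n$ a power of $2$), together with $\|H(n\cdot)\|_{L^p}=n^{-\frac2p}\|H\|_{L^p}$, gives $\|\theta_0\|_{B^s_{p,q}}\approx \delta_n n^{s-\frac2p}=\varepsilon_n$. Here the lower-order $L^p$ part is negligible since $n^{-2/p}\ll n^{s-2/p}$. One point needs checking: $0<\|H\|_{\dot B^s_{p,q}}<\infty$. Finiteness holds because $H$ is Schwartz and $s>0$. Positivity holds because $H\neq0$ and $H$ is not a polynomial. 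Summing the two contributions gives $\|u_0,\theta_0\|_{B^s_{p,q}}\approx\varepsilon_n$.

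For the $W^{1,\infty}$ estimate, I will compute directly. The sup norms satisfy $\|u_0\|_{L^\infty}+\|\theta_0\|_{L^\infty}\lesssim\delta_n$. For the gradients, differentiating $f'(nr)\frac{y}{r}$ gives $n f''(nr)\frac{y}{r}\frac{\mathbf{x}}{r}+f'(nr)\nabla\frac{y}{r}$. On the support $r\sim n^{-1}$, and $|\nabla(y/r)|\lesssim r^{-1}\sim n$, so $\|\nabla u_0\|_{L^\infty}\lesssim n\delta_n$; the same computation works for $\theta_0$.

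For the matching lower bound, I will evaluate on the ring $r=\frac{5}{4n}$, where $h(nr)=1$ and $f'(nr)=1$. There $\nabla\theta_0=\delta_n\nabla(y/r)$, and at the point $(x,y)=(r,0)$ this has modulus $\delta_n/r=\frac45 n\delta_n$. Hence $\|u_0,\theta_0\|_{W^{1,\infty}}\approx n\delta_n=\varepsilon_n n^{1+\frac2p-s}$, where the $L^\infty$ part $\delta_n$ is dominated by $n\delta_n$.

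I do not expect a real obstacle. The only delicate point is justifying that the homogeneous Besov norm of the fixed profile $H$ is a nonzero finite constant, which is standard.
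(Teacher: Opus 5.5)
Your proposal is correct and follows essentially the same route as the paper, which states the proposition as a direct consequence of the scaling argument in Lemma \ref{lem0}. You correctly observe that this argument applies verbatim to the profile $h(r)\frac{y}{r}$, a case Lemma \ref{lem0} does not literally state, and your $W^{1,\infty}$ bound is the elementary computation on $\{r\sim n^{-1}\}$ that the paper leaves implicit. Your additional checks supply details the paper omits: exactness of the dyadic scaling for $n=2^j$, the nondegeneracy $0<\|H\|_{\dot B^s_{p,q}}<\infty$, and the explicit lower bound $|\nabla\theta_0|=\frac45 n\delta_n$ at $(\frac{5}{4n},0)$, where $h\equiv1$ nearby so $h'(nr)=0$.
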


\subsection{Construction of Approximation solution}\label{subsec3}

Motivated by \cite{luo1d}, we define the approximate solution
\bal
&U(t,\mathbf{x})=u_0(\mathbf{x})=(u_{0,1},u_{0,2})(\mathbf{x})=u_{0,\alpha}(r)\vec{e}_{\alpha}, \quad u_{0,\alpha}(r):=\delta_nf'(nr),\label{sol1}\\
&\Theta(t,\mathbf{x})=\delta_nh(nr)\sin\f(\arctan\frac{y}{x}-\frac{\delta_nt}{r}\g).\label{sol2}
\end{align}
We observe that, $U$ is a stationary solution to the Euler system in $\R^2$, i.e.,
\begin{align}\label{seu}
\begin{cases}
\pa_tU+U\cdot \nabla U+\nabla P_0=0, \\
P_0=\int^r_{\frac12}\frac{u^2_{0,\alpha}(r')}{r'}\dd r', \\
\div\ U=0,
\end{cases}
\end{align}
and $\Theta$ solves the transport equation
\begin{align}\label{trans}
\begin{cases}
\pa_t \Theta+U\cdot \nabla \Theta=0, \\
\Theta(0,\mathbf{x})=\theta_0(\mathbf{x}).
\end{cases}
\end{align}
In fact, from \eqref{sol1}, we have
\begin{align*}
U\cdot\nabla U=\frac{u_{0,\alpha}(r)}{r}\pa_{\alpha}(u_{0,\alpha}(r)\vec{e}_{\alpha})
=-\frac{u^2_{0,\alpha}(r)}{r}\vec{e}_{r}=-\na\f(\int^r_{\frac12}\frac{u^2_{0,\alpha}(r')}{r'}\dd r'\g).
\end{align*}
Hence, we have $\pa_tU+U\cd\na U+\na P_0=0$ and $(\mathrm{Id}-\na\Delta^{-1}\D)(U\cd\na U)=0$.

We write the transport equation \eqref{trans} in polar coordinate as
\begin{align*}
\begin{cases}
\partial_t \Theta+\frac{\delta_nf'(nr)}{r}\partial_\alpha \Theta=0, \\
\Theta_0=\delta_nh(nr)\sin \alpha,
\end{cases}
\end{align*}
which has the solution
\begin{equation*}
\Theta(t,\mathbf{x})=\delta_nh(nr)\sin\left(\arctan\frac{y}{x}-\frac{\delta_nf'(nr)}{r}t\right)=\delta_nh(nr)\sin\left(\arctan\frac{y}{x}-\frac{\delta_n}{r}t\right),
\end{equation*}
here we notice that $f'=1$ on $\supp h$.

\subsection{Norm inflation of Approximation solution}\label{subsec4}

\begin{proposition}[Estimation of Approximate solution for $(U,\Theta)$]\label{pro1}
For any $t>0$ and $k\geq s$, there hold that
\bbal
&\|U\|_{L^p}\leq C\varepsilon_nn^{-s}, \quad\|\Theta(t)\|_{L^p}\leq C\varepsilon_nn^{-s},\quad \|U\|_{B^{k}_{p,q}}\approx\ep_nn^{k-s}.
\end{align*}
\end{proposition}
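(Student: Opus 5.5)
The plan is to reduce everything to explicit scaling computations, using the product structure in polar coordinates and Lemma \ref{lem0}. Since $U=u_0$ is stationary, $U=\delta_n(-f'(nr)\frac{y}{r},f'(nr)\frac{x}{r})$. Hence each component equals $\delta_n g$ with $g$ as in Lemma \ref{lem0}.

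\textbf{$L^p$ bounds.} The computation in the proof of Lemma \ref{lem0} gives $\|g\|_{L^p}\approx n^{-2/p}$. Therefore
\begin{align*}
\|U\|_{L^p}\le C\delta_n n^{-2/p}=C\varepsilon_n n^{-s}.
\end{align*}
For $\Theta(t)$, note that $|\Theta(t,\mathbf{x})|\le \delta_n |h(nr)|$ pointwise for every $t$, since $|\sin|\le1$. Passing to polar coordinates and scaling $r\mapsto r/n$ gives
\begin{align*}
\|\Theta(t)\|_{L^p}^p\le \delta_n^p\cdot 2\pi\int_0^\infty |h(nr)|^p r\,\dd r=2\pi\delta_n^p n^{-2}\int_0^\infty|h(r)|^pr\,\dd r.
\end{align*}
Thus $\|\Theta(t)\|_{L^p}\le C\delta_n n^{-2/p}=C\varepsilon_n n^{-s}$, uniformly in $t>0$.

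\textbf{Besov norm of $U$.} Lemma \ref{lem0} is stated for the exponent $s$, but its proof uses only homogeneous scaling together with $G\in C_c^\infty(\R^2)$, $G\not\equiv0$. It therefore holds verbatim for any positive exponent $k$: we have $\|g\|_{\dot B^k_{p,q}}=n^{k-2/p}\|G\|_{\dot B^k_{p,q}}$, and $0<\|G\|_{\dot B^k_{p,q}}<\infty$ because $G$ is Schwartz and nonzero. Combining this with $\|g\|_{B^k_{p,q}}\approx\|g\|_{L^p}+\|g\|_{\dot B^k_{p,q}}$ gives
\begin{align*}
\|g\|_{B^k_{p,q}}\approx n^{-2/p}+n^{k-2/p}\approx n^{k-2/p},
\end{align*}
since $k\ge s>0$ and $n\gg1$. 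Multiplying by $\delta_n$ then yields $\|U\|_{B^k_{p,q}}\approx \delta_n n^{k-2/p}=\varepsilon_n n^{k-s}$. The lower bound comes from using a single component, and the upper bound from summing the two components.

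\textbf{Main obstacle.} The only delicate point is the two-sided equivalence: we need the constants in $\|G\|_{\dot B^k_{p,q}}\approx1$ to be independent of $n$. This holds because $G$ is a fixed nonzero Schwartz function; nonzero-ness excludes a vanishing homogeneous norm, since a function with zero homogeneous norm would be a polynomial. Everything else is elementary.
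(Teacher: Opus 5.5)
Your proof is correct and takes the paper's route: the paper simply says the estimates follow from Lemma \ref{lem0}, and you fill in the details it leaves implicit. These are that the scaling argument of Lemma \ref{lem0} works for any exponent $k\ge s>0$ (with exact scaling since $n$ is a power of $2$), that $\|\Theta(t)\|_{L^p}\le C\delta_n n^{-2/p}$ follows from the pointwise bound $|\Theta|\le\delta_n|h(nr)|$, and that $\delta_n n^{-2/p}=\varepsilon_n n^{-s}$.
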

\begin{proof}
It follows from Lemma \ref{lem0} that the desired results.
\end{proof}
To estimate the $B^s_{p,q}$-norm of $\Theta$, we need to establish the following crucial Lemma.
\begin{lemma}\label{lem2}
 Assume that $s\in(0,1+2/p)$ with $p\in(1,\infty)$. Let $h\in C^\infty_0(\R)$ be given above. Define \[
g(t,\mathbf{x})=h(n r)\sin\f(\arctan\frac{y}{x}-\frac{\delta_nt}{r}\g).
\]
Then we have
\bbal
\|g\|_{B^s_{p,q}}\leq C n^{s-\fr{2}{p}}\cdot \max\f\{1,n\delta_nt\g\}^s.
\end{align*}
If $n\delta_nt \gg 1$, we also have
\bbal
\|g\|_{B^s_{p,q}}\approx  n^{s-\fr{2}{p}}\cdot(n\delta_nt)^s.
\end{align*}
\end{lemma}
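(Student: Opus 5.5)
Rescale as in Lemma \ref{lem0}. Write $g(t,\mathbf{x})=G_\lambda(n\mathbf{x})$ with $\lambda:=n\delta_n t$ and
\[
G_\lambda(\mathbf{x}):=h(r)\sin\Big(\alpha-\frac{\lambda}{r}\Big),
\]
which is valid because $\delta_n t/r=n\delta_n t/(nr)$. The homogeneous scaling gives $\|g\|_{\dot B^s_{p,q}}=n^{s-\fr2p}\|G_\lambda\|_{\dot B^s_{p,q}}$, and $\|g\|_{L^p}=n^{-\fr2p}\|G_\lambda\|_{L^p}$. The $L^p$ norm is uniform in $\lambda$: in polar coordinates, for each fixed $r$ the angular integral of $|\sin(\alpha-\lambda/r)|^p$ equals $\int_0^{2\pi}|\sin\eta|^p\dd\eta$, so $\|G_\lambda\|_{L^p}\approx1$. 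Hence both claims reduce to showing, on the annulus $1\le r\le 3/2$,
\[
\|G_\lambda\|_{\dot B^s_{p,q}}\le C\max\{1,\lambda\}^s,\qquad \|G_\lambda\|_{\dot B^s_{p,q}}\ge c\lambda^s\ \ (\lambda\gg1).
\]
Since $n\ge1$, the $L^p$ part $n^{-2/p}$ is dominated by $n^{s-2/p}$.

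\textbf{Upper bound.} Differentiating in polar coordinates, every $r$-derivative of $\sin(\alpha-\lambda/r)$ produces at most a factor $\lambda/r^2\lesssim\lambda$. Angular derivatives cost $O(1)$ on the annulus, and derivatives of $h$ are $O(1)$. So for integers $k\ge0$,
\[
\|\nabla^k G_\lambda\|_{L^p}\le C_k\max\{1,\lambda\}^k,
\]
and the support is fixed. Pick an integer $k>s$ and interpolate between $L^p$ and $\dot W^{k,p}$. Concretely, for $2^j\le \max\{1,\lambda\}$ use $\|\dot\Delta_jG_\lambda\|_{L^p}\lesssim\|G_\lambda\|_{L^p}$. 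For $2^j>\max\{1,\lambda\}$ use $\|\dot\Delta_jG_\lambda\|_{L^p}\lesssim 2^{-jk}\|\nabla^kG_\lambda\|_{L^p}$. Summing the geometric series in $\ell^q$ gives $\|G_\lambda\|_{\dot B^s_{p,q}}\lesssim\max\{1,\lambda\}^s$ for any $q$, provided $0<s<k$. The low frequencies $j<0$ are controlled by $L^p$ together with the compact support, via Bernstein and the $L^1$ bound.

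\textbf{Lower bound (the main obstacle).} Since $\dot B^s_{p,1}\supset\dot B^s_{p,q}$ norm ordering goes the wrong way, I would bound from below the weakest norm, $\dot B^s_{p,\infty}$. I would use the reverse interpolation inequality
\[
\|F\|_{\dot W^{1,p}}\le C\|F\|_{\dot B^s_{p,\infty}}^{\theta}\|F\|_{\dot B^{2}_{p,\infty}}^{1-\theta}
\]
when $s<1$, with a similar dual version if $s\ge1$ (interpolate $L^p$ between $\dot B^{-1}_{p,\infty}$-type and $\dot B^s$, or $\dot W^{1,p}$ between $\dot B^s$ and $\dot B^{s'}$ with $s'>s$). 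Lower bounds for integer norms are explicit: $|\partial_rG_\lambda|\ge c\lambda|\cos(\alpha-\lambda/r)|$ on $9/8\le r\le 11/8$ up to $O(1)$ errors, so $\|\nabla G_\lambda\|_{L^p}\ge c\lambda$. The upper bounds $\|G_\lambda\|_{\dot B^{\sigma}_{p,\infty}}\lesssim\lambda^\sigma$ come from the previous paragraph. Substituting yields $\lambda\lesssim\|G_\lambda\|_{\dot B^s}^{\theta}\lambda^{(1-\theta)\sigma}$, and the exponents balance to give $\|G_\lambda\|_{\dot B^s_{p,\infty}}\gtrsim\lambda^s$.

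An alternative, more direct route is to find a frequency $2^j\approx\lambda$ with $\|\dot\Delta_jG_\lambda\|_{L^p}\gtrsim1$. This holds because $G_\lambda$ is essentially $\mathrm{Im}\,(e^{-i\lambda/r})$ times a smooth profile, which oscillates radially at frequency about $\lambda$. The mass outside frequencies $[c\lambda,C\lambda]$ is then controlled by integrating by parts with the phase. Which of the two routes is cleaner is the delicate point; I would try the interpolation route first, since it only needs the already-established upper bounds.
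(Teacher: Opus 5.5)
Your strategy matches the paper's. You bound integer Sobolev norms of $g$ two-sidedly and pass to $B^s_{p,q}$ by interpolation: between $L^p$ and a high-order norm for the upper bound, and by placing an integer norm with a known lower bound strictly between $\dot B^s_{p,\infty}$ and a norm with a known upper bound for the lower bound. The rescaling to $G_\lambda$ with $\lambda=n\delta_n t$ is correct, as are the Littlewood--Paley proof of the upper bound, the bound $\|\na G_\lambda\|_{L^p}\ge c\lambda$, and the case $s<1$.

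The gap is the lower bound for $s\ge1$, which is part of the stated range. The inequality you propose, interpolating $\dot W^{1,p}$ between $\dot B^s$ and $\dot B^{s'}$ with $s'>s$, cannot hold, because the intermediate index $1$ must lie strictly between the endpoints and here $1\le s<s'$. Taking $s'<1$ instead repairs the case $s>1$. For example, with $s'=0$ and $\|G_\lambda\|_{L^p}\approx1$ one gets $\lambda\lesssim\|\na G_\lambda\|_{L^p}\lesssim\|G_\lambda\|_{L^p}^{1-1/s}\|G_\lambda\|_{\dot B^s_{p,\infty}}^{1/s}$. However, no interpolation built on the $\dot W^{1,p}$ lower bound, together with upper bounds at positive indices, can reach $s=1$ itself. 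Your other option needs $\|G_\lambda\|_{\dot B^{-1}_{p,\infty}}\lesssim\lambda^{-1}$, which your upper-bound paragraph does not give, since it only covers positive indices. Proving it requires integrating by parts in the phase $\lambda/r$, and for $p<2$ also the vanishing mean of $G_\lambda$ to handle low frequencies.

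The missing ingredient is a lower bound at a higher integer order. The paper claims its two-sided $W^{k,p}$ bound for all $k\ge2$ ``similarly'' rather than proving it, so you should supply the argument. On $9/8\le r\le 11/8$, where $h\equiv1$, one has $\pa_r^mG_\lambda=(\lambda/r^2)^m\sin^{(m)}(\alpha-\lambda/r)+O(\lambda^{m-1})$ and $|\pa_r^mG_\lambda|\le|\na^mG_\lambda|$. Hence $\|\na^mG_\lambda\|_{L^p}\gtrsim\lambda^m$ for $\lambda\gg1$. For an integer $m>s$, the inequality $\|\na^mG_\lambda\|_{L^p}\lesssim\|G_\lambda\|_{\dot B^s_{p,\infty}}^{\theta}\|G_\lambda\|_{\dot B^{m+1}_{p,\infty}}^{1-\theta}$ then gives $\|G_\lambda\|_{\dot B^s_{p,\infty}}\gtrsim\lambda^s$, exactly as in your $s<1$ computation.

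Take $m=\lfloor s\rfloor+1$ rather than the paper's fixed choice $m=2$. When $p<2$ the range $s<1+2/p$ includes $s\ge2$, and there the paper's interpolation of $W^{2,p}$ between $B^s_{p,\infty}$ and $W^{3,p}$ no longer applies.
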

\begin{proof} Obviously, we have
\bbal
\|g\|_{L^p}\leq Cn^{-\fr{2}{p}}.
\end{align*}
For short,
\[
\phi(\mathbf{x}):=h(n r),\qquad\varphi(\mathbf{x}):=\arctan\frac{y}{x}-\frac{\delta_nt}{r}.
\]
Note the derivative on $f$ gives a factor $n$, thus, for all integers $k\ge1$
\[
|\nabla^k\phi|\leq |h^{(k)}(n r)| n^{k}.
\]
For $k\geq1$, we have
\bbal
\nabla^kg&=h(n r) (\nabla\varphi)^k\sin^{(k)}\varphi+nh^{(k)}(n r)\frac{\mathbf{x}}{r}\sin\varphi+\cdots.
\end{align*}
We observe that the first term $h(n r) (\nabla\varphi)^k\sin^{(k)}\varphi$ is the main contribution. To see this, we take $k=1$ for example.
Due to
$$\nabla \varphi=\frac{1}{r^2}(-y,x) + \frac{\delta_nt}{r^3}(x,y),
$$
we have
\bbal
\nabla g&= nh'(n r)\frac{\mathbf{x}}{r}\sin\varphi+h(n r)\cos\varphi \f(\frac{\mathbf{x}^{\bot}}{r^2} +\delta_nt \frac{\mathbf{x}}{r^3}\g).
\end{align*}
On $\supp h(n r)$, which implies that $r\approx n^{-1}$, one has
\[
|\nabla\varphi|\leq \frac{1}{r} \f(1+\frac{\delta_nt}{r}\g)\leq 2n\cdot\max\{1,n\delta_nt\}.
\]
If $n\delta_nt \gg 1$, we furthermore deduce that
\[
|\nabla\varphi|\geq n\cdot (n\delta_nt-1)\geq \fr12 n\cdot (n\delta_nt).
\]
Noticing that
$$\|nh'(n r)\frac{\mathbf{x}}{r}\sin\varphi\|_{L^p}\leq Cn^{1-\frac2p},$$
we have
\bbal
\|g\|_{W^{1,p}}\leq Cn^{1-\fr2p}\cdot \max\{1,n\delta_nt\},
\end{align*}
and in particular for $n\delta_nt \gg 1$
\bbal
\|g\|_{W^{1,p}}\geq  cn^{1-\fr2p}\cdot (n\delta_nt),
\end{align*}
where we have used
\bbal
\f\|h(nr)\cos\f(\arctan\frac{y}{x}-\frac{\delta_nt}{r}\g)\g\|^p_{L^p}
=&\int_0^{2\pi}|\cos\eta|^p\dd\eta\int_0^\infty |h(nr)|^pr\dd r\approx n^{-2}.
\end{align*}
For the case $k\geq2$, similarly, we have
\bbal
\|g\|_{W^{k,p}}\leq Cn^{k-\fr2p}\cdot \max\{1,n\delta_nt\}^k,
\end{align*}
and in particular for $n\delta_nt \gg 1$
\bal\label{hh}
\|g\|_{W^{k,p}}\approx n^{k-\fr2p}\cdot (n\delta_nt)^k.
\end{align}
Thus, by the interpolation, we obtain the upper bound
\bbal
\|g\|_{B^s_{p,q}}\leq \|g\|^\alpha_{L^p}\|g\|^{1-\alpha}_{W^{3,p}}\leq Cn^{s-\fr{2}{p}}\cdot \max\f\{1,n\delta_nt\g\}^s.
\end{align*}
To obtain the lower  bound, we divide two cases.
For $s\in(0,1)$, we have
\bbal
\|g\|_{W^{1,p}}\leq \|g\|^\alpha_{B^s_{p,\infty}}\|g\|^{1-\alpha}_{W^{2,p}},
\end{align*}
which follows from \eqref{hh} that
\bbal
\|g\|_{B^s_{p,\infty}}\geq cn^{s-\fr{2}{p}}\cdot (n\delta_nt)^s.
\end{align*}
For $s\in[1,1+2/p)$, we have
\bbal
\|g\|_{W^{2,p}}\leq \|g\|^\alpha_{B^s_{p,\infty}}\|g\|^{1-\alpha}_{W^{3,p}},
\end{align*}
which follows from \eqref{hh} that
\bbal
\|g\|_{B^s_{p,\infty}}\geq cn^{s-\fr{2}{p}}\cdot (n\delta_nt)^s.
\end{align*}
This ends the proof of Lemma \ref{lem2}.
\end{proof}
\begin{remark}\label{re1}
From the above proof, we deduce that Lemma \ref{lem2} holds for any $s>0$.
\end{remark}
From now on, we set $T_n:= n^{s-1-\fr{2}{p}}\ep_n^{-(1+\fr2{s})}$. We should notice that, the choice of $T_n$ is to ensure the norm inflation of approximate solution.

\begin{proposition}[Norm inflation of Approximate solution for $\Theta$]\label{pro2}
Let $\gamma\in(0,1+\fr2{p}]$, the function $\Theta$ given by
\eqref{sol2} satisfies
\begin{align*}
\|\Theta(T_n)\|_{B^{\gamma}_{p,q}}\approx \ep_n^{1-\frac{2\gamma}{s}}n^{\gamma-s}.
\end{align*}
In particular,
$$\|\Theta(T_n)\|_{B^{s}_{p,q}}\approx\ep^{-1}_n.$$
\end{proposition}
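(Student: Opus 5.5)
This is a direct consequence of Lemma \ref{lem2} combined with the parameter choices. Since $\Theta(t,\mathbf{x})=\delta_n g(t,\mathbf{x})$ with $g$ as in Lemma \ref{lem2}, it suffices to evaluate $n\delta_nT_n$ and then apply the two-sided bound of that lemma with $s$ replaced by $\gamma$. Remark \ref{re1} allows this substitution, because the proof of Lemma \ref{lem2} does not use $s<1+2/p$. All that is needed is $\gamma>0$, which is the standing assumption on $\gamma$.

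First, from the definitions of $\delta_n$ and $T_n$,
\begin{align*}
n\delta_nT_n=n\cdot\ep_n n^{\frac2p-s}\cdot n^{s-1-\frac2p}\ep_n^{-(1+\frac2s)}=\ep_n^{-\frac2s}.
\end{align*}
Since $\ep_n\to0$ as $n\to\infty$, this gives $n\delta_nT_n\gg1$ for $n$ large. We are therefore in the regime of the second (two-sided) estimate of Lemma \ref{lem2}, applied with regularity index $\gamma$:
\begin{align*}
\|g(T_n)\|_{B^\gamma_{p,q}}\approx n^{\gamma-\frac2p}(n\delta_nT_n)^\gamma=n^{\gamma-\frac2p}\ep_n^{-\frac{2\gamma}{s}}.
\end{align*}
Multiplying by $\delta_n=\ep_nn^{\frac2p-s}$ yields
\begin{align*}
\|\Theta(T_n)\|_{B^\gamma_{p,q}}\approx\ep_n^{1-\frac{2\gamma}{s}}n^{\gamma-s},
\end{align*}
which is the claim. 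Taking $\gamma=s$ (admissible, since $s\in(0,1+2/p)$) gives $\ep_n^{-1}$.

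The main point requiring care is that the lower bound in Lemma \ref{lem2} is stated only for $B^\gamma_{p,\infty}$, via interpolation against $W^{1,p}$ or $W^{2,p}$. For general $q$ we use the embedding $B^\gamma_{p,q}\hookrightarrow B^\gamma_{p,\infty}$, so $\|g\|_{B^\gamma_{p,q}}\geq\|g\|_{B^\gamma_{p,\infty}}$ and the lower bound transfers. The upper bound for all $q$ comes from the interpolation between $L^p$ and $W^{3,p}$ in Lemma \ref{lem2}, with an exponent chosen so that the Besov index is $\gamma$.

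Two further checks are needed for the case split in the lower-bound argument. It distinguishes $\gamma<1$ from $\gamma\geq1$. At the endpoint $\gamma=1+2/p<2$, the interpolation $W^{2,p}$ between $B^\gamma_{p,\infty}$ and $W^{3,p}$ still applies. For $\gamma=1$ the same argument works, using $W^{2,p}$ between $B^1_{p,\infty}$ and $W^{3,p}$. In every case, \eqref{hh} supplies two-sided bounds of the form $n^{k-\frac2p}(n\delta_nT_n)^k$, and the powers combine consistently, so the interpolation yields the sharp lower bound without loss.
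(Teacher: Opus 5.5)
Your argument is correct and is exactly the paper's: compute $n\delta_nT_n=\ep_n^{-2/s}\gg1$, apply the two-sided bound of Lemma \ref{lem2} at index $\gamma$ (licensed by Remark \ref{re1}), and multiply by $\delta_n$. Your use of $B^\gamma_{p,q}\hookrightarrow B^\gamma_{p,\infty}$ to carry the lower bound over to general $q$ is a worthwhile detail that the paper leaves implicit.

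One slip in your side check: $1+\frac2p<2$ holds only for $p>2$. So when $p\le2$ and $\gamma\in[2,1+\frac2p]$, you cannot interpolate $W^{2,p}$ between $B^\gamma_{p,\infty}$ and $W^{3,p}$. Instead, interpolate $W^{3,p}$ between $B^\gamma_{p,\infty}$ and $W^{4,p}$, using \eqref{hh} with $k=3,4$. The paper's proof of Lemma \ref{lem2} has the same blind spot for $s\ge2$, which Remark \ref{re1} implicitly covers.
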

\begin{proof}
Notice that $n\delta_nT_n= \ep^{-\fr2{s}}_n\gg 1$, by Lemma \ref{lem2}, it is trivial that
\bbal
\|\Theta(T_n)\|_{B^{\gamma}_{p,r}}&= \delta_n\f\|h(nr)\sin\f(\arctan\frac{y}{x}-\frac{\delta_n}{r} T_n\g)\g\|_{B^{\gamma}_{p,q}}\\
&\approx\delta_n n^{\gamma-\fr{2}{p}}(n\delta_nT_n)^\gamma\approx \ep_n^{1-\frac{2\gamma}{s}}n^{\gamma-s}.
\end{align*}
Then we complete the proof of Proposition \ref{pro2}.
\end{proof}

\subsection{Error equation and estimation of error}\label{subsec5}
Define the difference
$$v:=u-U,\;\;\beta:=\theta-\Theta,$$
then we have
\begin{align}\label{er}
\begin{cases}
\pa_t v+u\cdot \nabla v=\mathbf{P}\f(\beta e_2+\Theta e_2-v\cdot\na U\g)+\mathbf{Q}(v\cdot \nabla u), \\
\pa_t \beta+u\cdot \nabla \beta+v\cd\na \Theta=0,\\
(v,\beta)(0,\mathbf{x})=(0,0),
\end{cases}
\end{align}
where we have used $\mathbf{P}=\mathrm{Id}-\na\Delta^{-1}$ and $\mathbf{Q}=\na\Delta^{-1}\D$.
\begin{proposition}[Smallness of Error]\label{pro3}
    For $0 \le t \le T_n$, there holds
\begin{align}\label{ok0}
\begin{cases}
\|(v,\beta)(t)\|_{L^p}\leq \Gamma^3_nn^{-s}, \\
\|(v,\beta)(t)\|_{B^{2/p}_{p,1}}\leq \Gamma^{3}_n n^{\fr{2}{p}-s},\\
\|(v,\beta)(t)\|_{B^{1+2/p}_{p,1}}\leq \Gamma^{\fr32}_n n^{1+\fr{2}{p}-s}.
\end{cases}
\end{align}
In particular, by the interpolation, we have $$\|v,\beta\|_{L^\infty_tB^{s}_{p,1}}\leq \Gamma_n.$$
\end{proposition}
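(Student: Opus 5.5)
We run a bootstrap (continuity) argument on $[0,T_n]$. We assume that on a maximal interval $[0,T^*]\subset[0,T_n]$ the bounds in \eqref{ok0} hold with the constants doubled. We then show that they in fact hold with the original constants, so $T^*=T_n$. The key quantity is the Lipschitz norm of the full velocity. By Proposition \ref{pro0}, $\|\na U\|_{L^\infty}\approx \ep_n n^{1+2/p-s}$. By the bootstrap assumption and $B^{1+2/p}_{p,1}\hookrightarrow W^{1,\infty}$, $\|\na v\|_{L^\infty}\lesssim \Gamma_n^{3/2}n^{1+2/p-s}$. Hence
\[
\int_0^{T_n}\|\na u\|_{L^\infty}\,\dd t\lesssim T_n\,\ep_n n^{1+2/p-s}=\ep_n^{-2/s},
\]
so every Gronwall factor is at most $e^{C\ep_n^{-2/s}}\le e^{C\sqrt{\log\log n}}\le C_2\log n=C_2\Gamma_n^{-1}$. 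This follows from the choice $\ep_n=(\log\log n)^{-sp/(8(1+p))}$, since $\ep_n^{-2/s}\le \ep_n^{-\frac4s(1+\frac1p)}$. The powers of $\Gamma_n$ in \eqref{ok0} are chosen so that such logarithmic losses can be absorbed.

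\emph{Step 1: the $L^p$ estimate.} We apply $L^p$ transport estimates to \eqref{er}. The operators $\mathbf P,\mathbf Q$ are bounded on $L^p$ for $1<p<\infty$, and $\|v\cd\na u\|_{L^p}\le\|v\|_{L^p}\|\na u\|_{L^\infty}$. This gives
\[
\|(v,\beta)(t)\|_{L^p}\le \int_0^t\Big(\|\Theta\|_{L^p}+\|(v,\beta)\|_{L^p}\big(1+\|\na U,\na v,\na\Theta\|_{L^\infty}\big)\Big)\dd\tau .
\]
The forcing contributes $T_n\ep_n n^{-s}=n^{-s}\,n^{s-1-2/p}\ep_n^{-2/s}$. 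This carries a negative power of $n$, so after multiplication by the Gronwall factor $\log n$ it is far below $\Gamma_n^3n^{-s}$. The term $v\cd\na\Theta$ is the dangerous one. We handle it with $\|\na\Theta(t)\|_{L^\infty}\lesssim\delta_n n(1+n\delta_nt)$, whose time integral is of size $(n\delta_nT_n)^2=\ep_n^{-4/s}$; this is still only $\sqrt{\log\log n}$-type growth in the exponent. We choose the exponent of $\ep_n$ so that this growth is also absorbed.

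\emph{Step 2: the higher estimates.} For $B^{2/p}_{p,1}$ and $B^{1+2/p}_{p,1}$ we use the standard Besov transport estimate
\[
\|f(t)\|_{B^\sigma_{p,1}}\le e^{CV(t)}\Big(\|f_0\|_{B^\sigma_{p,1}}+\int_0^t e^{-CV}\|F\|_{B^\sigma_{p,1}}\Big),\qquad V=\int\|\na u\|_{B^{2/p}_{p,1}\cap L^\infty},
\]
which is valid for $\sigma\in(-2/p,1+2/p]$ with $\sigma<1+2/p$ or $q=1$. We treat the forcings as follows. First, $\mathbf P(\Theta e_2)$ is bounded in $B^\sigma_{p,1}$ by Lemma \ref{lem2}/Remark \ref{re1}, giving $\delta_n n^{\sigma-2/p}(n\delta_nt)^{\sigma}$. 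Its time integral up to $T_n$ gives $n^{\sigma-s}\ep_n^{1-2\sigma/s}\,n^{-1}\ep_n^{-2/s}\cdot n^{s-2/p}\cdots$. Tracking exponents, the integral of $\delta_n n^{1}(n\delta_n t)^{1}$ over $[0,T_n]$ is $\ep_n^{-2/s}\cdot(n\delta_nT_n)$ up to powers of $n$. We check that the net power of $n$ stays strictly below the target $n^{\sigma-s}$, using $s<1+2/p$ (this is where supercriticality enters). Second, the terms $v\cd\na U$, $\mathbf Q(v\cd\na u)$ and $v\cd\na\Theta$ are bilinear. We estimate them by the algebra property of $B^{2/p}_{p,1}$ and the product law $\|fg\|_{B^{1+2/p}_{p,1}}\lesssim\|f\|_{L^\infty}\|g\|_{B^{1+2/p}_{p,1}}+\|f\|_{B^{1+2/p}_{p,1}}\|g\|_{L^\infty}$. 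These are linear in $(v,\beta)$ with coefficients such as $\|U\|_{B^{2+2/p}_{p,1}}\approx\ep_n n^{2+2/p-s}$ from Proposition \ref{pro1}. Time-integrated against $T_n$, they produce factors $\ep_n^{-2/s}$ at the Lipschitz level, and a factor $n$ more at the top level. That extra $n$ is compensated because the top-level bound is allowed an extra $n$ compared with the middle level. The weaker power $\Gamma_n^{3/2}$ at the top level leaves room for the $\log n$ Gronwall loss.

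\emph{Main obstacle and conclusion.} The hardest step is the term $v\cd\na\Theta$ at the $B^{1+2/p}_{p,1}$ level. There $\|\Theta\|_{B^{2+2/p}_{p,1}}$ grows like $\delta_n n^{2}(n\delta_nt)^{2+2/p}$, so the loss in powers of $\ep_n^{-1}$ and $\log n$ is largest. I would first try to close it by pairing the middle-level bound $\|v\|_{B^{2/p}_{p,1}}\le\Gamma_n^3n^{2/p-s}$ with this growth. If the powers of $\ep_n$ do not close, the fallback is to shift derivatives by estimating $\beta$ only in $B^{1+2/p}_{p,\infty}$, or to redo the parameter bookkeeping. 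Once all three bounds are closed, interpolation between the $L^p$ and $B^{1+2/p}_{p,1}$ bounds gives
\[
\|v,\beta\|_{B^s_{p,1}}\lesssim(\Gamma_n^3n^{-s})^{1-\lambda}(\Gamma_n^{3/2}n^{1+2/p-s})^{\lambda}\le\Gamma_n,\qquad s=\lambda(1+2/p),
\]
where the inequality uses that the powers of $n$ cancel exactly.
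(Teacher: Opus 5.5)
Your route is the paper's: a bootstrap on the three levels $L^p$, $B^{2/p}_{p,1}$, $B^{1+2/p}_{p,1}$, transport estimates for the error system, and Gronwall factors bounded by $e^{C\sqrt{\log\log n}}\le C\log n$. In the paper's bookkeeping the largest exponent is $\int_0^{T_n}\|\Theta\|_{B^{1+2/p}_{p,1}}\,\dd\tau\approx\ep_n^{-\frac4s(1+\frac1p)}=\sqrt{\log\log n}$, coming from $v\cd\na\Theta$ at the middle level, and this is what fixes the exponent in $\ep_n$. The $\Theta$-forcing is killed by the negative power $n^{s-1-2/p}$. Cleanly, $\int_0^{T_n}\|\Theta\|_{B^\sigma_{p,1}}\,\dd\tau\lesssim n^{\sigma-s}\cdot n^{s-1-\frac2p}\ep_n^{-\frac2s(1+\sigma)}$, which replaces your garbled exponent tracking.

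The gap is that you never check the step you yourself call decisive, $v\cd\na\Theta$ at the $B^{1+2/p}_{p,1}$ level; you only offer fallbacks. Your first plan is exactly the paper's Step 3, and it closes. With $n\delta_nT_n=\ep_n^{-2/s}$,
\[
\int_0^{T_n}\|v\|_{L^\infty}\|\Theta\|_{B^{2+2/p}_{p,1}}\,\dd\tau\lesssim \Gamma_n^3n^{\frac2p-s}\cdot\delta_nn^2T_n\,(n\delta_nT_n)^{2+\frac2p}=\Gamma_n^3\,\ep_n^{-\frac2s(3+\frac2p)}\,n^{1+\frac2p-s}.
\]
Here $\ep_n^{-\frac2s(3+\frac2p)}=(\log\log n)^{\frac{3p+2}{4(p+1)}}\le\log\log n$. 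After the Gronwall factor $C\log n=C\Gamma_n^{-1}$ this term is at most $C\Gamma_n^{2}\log\log n\,n^{1+\frac2p-s}\ll\Gamma_n^{3/2}n^{1+\frac2p-s}$. The extra $n$ from the extra derivative on $\Theta$ is matched by the $n$ in the top-level target. The ratio $\Gamma_n^{-3/2}$ between the middle-level power $\Gamma_n^3$ and the top-level power $\Gamma_n^{3/2}$ absorbs both the $\log n$ loss and the $\log\log n$ loss; that is why those powers were chosen. The fallbacks are unnecessary. The $B^{1+2/p}_{p,\infty}$ one is in fact unavailable, since the transport estimate you quote excludes $\sigma=1+2/p$ with $q=\infty$.

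There is one place where the tools you name would actually fail: the quadratic part $\mathbf Q(v\cd\na v)$ of $\mathbf Q(v\cd\na u)$ at the top level. Your product law would need $\|\na v\|_{B^{1+2/p}_{p,1}}$, i.e.\ $v\in B^{2+2/p}_{p,1}$, which the bootstrap does not control. For $v\cd\na U$ and $v\cd\na\Theta$ the extra derivative falls on explicit smooth functions, so those terms are fine. For the quadratic part, use the pressure structure. Since $\div u=0$, you have $\div(v\cd\na u)=\sum_{i,j}\pa_iv_j\,\pa_ju_i$. Hence $\mathbf Q(v\cd\na u)=\na\Delta^{-1}\big(\sum_{i,j}\pa_iv_j\,\pa_ju_i\big)$ involves only first derivatives and gains one derivative away from low frequencies, which gives
\[
\|\mathbf Q(v\cd\na u)\|_{B^{1+2/p}_{p,1}}\lesssim\|v\|_{L^p}\|\na u\|_{L^\infty}+\|\na v\|_{B^{2/p}_{p,1}}\|\na u\|_{B^{2/p}_{p,1}}.
\]
The right-hand side is linear in $\|v\|_{B^{1+2/p}_{p,1}}$ with coefficient $\|u\|_{B^{1+2/p}_{p,1}}$, whose time integral is $O(\ep_n^{-2/s})$. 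The paper's Step 3 bound tacitly relies on this. With these two points supplied, your sketch matches the paper's proof.
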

\begin{proof}
Since $(v,\beta)(0)=0$, we can assume that in a short time $0\le t\le t_0$, it holds the following \textbf{Bootstrap assumption:}
\begin{align*}
\|(v,\beta)(t)\|_{L^p}\leq \Gamma^3_nn^{-s}, \quad \|(v,\beta)(t)\|_{B^{2/p}_{p,1}}\leq \Gamma^{3}_n n^{\fr{2}{p}-s},
\quad\|(v,\beta)(t)\|_{B^{1+2/p}_{p,1}}\leq \Gamma_n n^{1+\fr{2}{p}-s}, \quad t \in[0,t_{0}].
\end{align*}
Let us define the maximum time of the bootstrap bound as
\bbal
T_*:=\sup\f\{t\in[0,T_n]: \|(v,\beta)(t)\|_{L^p}\leq \Gamma^3_nn^{-s}, \
 \|(v,\beta)(t)\|_{B^{\fr{2}{p}}_{p,1}}\leq \Gamma^{3}_n n^{\fr{2}{p}-s},\ \|(v,\beta)(t)\|_{B^{1+\fr{2}{p}}_{p,1}}\leq \Gamma_n n^{1+\fr{2}{p}-s}\g\}.
\end{align*}
By continuity, $T_{*}>0 $. It suffices to prove $T_*=T_n$. Assume that $T_*<T_n$, our aim is to prove
\begin{align}\label{ok}
\begin{cases}
\|(v,\beta)(T_*)\|_{L^p}\leq \Gamma^4_nn^{-s}, \\
\|(v,\beta)(T_*)\|_{B^{2/p}_{p,1}}\leq \Gamma^{4}_n n^{\fr{2}{p}-s},\\
\|(v,\beta)(T_*)\|_{B^{1+2/p}_{p,1}}\leq \Gamma^{\fr32}_n n^{1+\fr{2}{p}-s},
\end{cases}
\end{align}
which extends the bootstrap bound slightly
beyond $T_*$ by continuity argument and leads to a contradiction with the definition of $T_*$. Thus $T_*=T_n$ and
\eqref{ok0} follows on $[0,T_n]$.

By the definition of $T_*$, and using Propositions \ref{pro1} and \ref{pro2}, we have for $t \in[0,T_*]$
\begin{equation*}
     \|(v,\beta)(t)\|_{B^{1+2/p}_{p,1}}+\|(U,\Theta)(t)\|_{B^{1+2/p}_{p,1}} \lesssim \ep_n^{1-\frac{2}{s}(1+\fr2p)} n^{1+\fr{2}{p}-s},
\end{equation*}
which implies that for $t \in[0,T_*]$
\begin{align}\label{eq-a}
     \int_0^t\|(v,\beta)(\tau)\|_{B^{1+2/p}_{p,1}} +\|(U,\Theta)(\tau)\|_{B^{1+2/p}_{p,1}}\dd \tau  &\leq CT_n\ep^{1-\frac{2}{s}(1+\fr2p)}_n n^{1+\fr{2}{p}-s}\leq C\ep_n^{-\frac{4}{s}(1+\fr1p)}.
\end{align}

{\bf Step 1: $L^{p}$ estimate.}\, A standard $L^p$ $(1<p<\infty)$ estimate gives that for $t \in[0,T_*]$
\bbal
\|(v,\beta)(t)\|_{L^p}&\leq C\int^t_0\|(v,\beta)(\tau)\|_{L^p}\f(1+\|u,U,\Theta\|_{W^{1,\infty}}\g)\dd \tau+ C\int^t_0\|\Theta\|_{L^p}\dd \tau
\\&\leq C\int^t_0\|(v,\beta)(\tau)\|_{L^p}\f(1+\|u,U,\Theta\|_{B^{1+2/p}_{p,1}}\g)\dd \tau+ Ct\ep_nn^{-s}.
\end{align*}
Using Gronwall's inequality and \eqref{eq-a} yield for $t \in[0,T_*]$
\bbal
\|(v,\beta)(t)\|_{L^p}&\leq Ct\ep_nn^{-s}\exp\f\{C\ep_n^{-\frac{4}{s}(1+\fr1p)}\g\}
\\&\leq C \log\log n\cdot n^{s-1-\fr{2}{p}}\cdot\log n\cdot n^{-s}\\
&\leq \Gamma^4_nn^{-s}.
\end{align*}

{\bf Step 2: $B^{2/p}_{p,1}$ estimate.}\, Using the regularity theory of transport equation (see \cite[Theorem 3.14]{BCD}), we have
\bbal
\|(v,\beta)(t)\|_{B^{2/p}_{p,1}}&\leq C\int^t_0\|(v,\beta)(\tau)\|_{B^{2/p}_{p,1}}(1+\|u\|_{B^{1+2/p}_{p,1}}+\|U,\Theta\|_{W^{1,\infty}})\dd \tau
\\& \quad + C\int^t_0\|(v,\beta)(\tau)\|_{L^\infty}\|U,\Theta\|_{B^{1+2/p}_{p,1}}\dd \tau+C\int^t_0\|\Theta\|_{B^{2/p}_{p,1}}\dd \tau\\
&\leq C\int^t_0\|(v,\beta)(\tau)\|_{B^{2/p}_{p,1}}\f(1+\|v,\beta\|_{B^{1+2/p}_{p,1}}+\|U,\Theta\|_{B^{1+2/p}_{p,1}}\g)\dd \tau+ Ct\ep_n^{1-\frac{4}{sp}},
\end{align*}
which implies
\bbal
\|(v,\beta)(t)\|_{B^{2/p}_{p,1}}&\leq Ct\ep_n^{1-\frac{4}{sp}}n^{\fr{2}{p}-s}\exp\f\{C\ep_n^{-\frac{4}{s}(1+\fr1p)}\g\}
\\&\leq C\log\log n\cdot n^{s-1-\fr{2}{p}}\cdot \log n\cdot n^{\fr{2}{p}-s}\\
&\leq \Gamma^4_nn^{\fr{2}{p}-s}.
\end{align*}

{\bf Step 3: $B^{1+2/p}_{p,1}$ estimate.}\, Similarly, we have
\bbal
\|(v,\beta)(t)\|_{B^{1+2/p}_{p,1}}&\leq C\int^t_0\|(v,\beta)(\tau)\|_{B^{1+2/p}_{p,1}}(1+\|U,v\|_{B^{1+2/p}_{p,1}}+\|U,\Theta\|_{W^{1,\infty}})\dd \tau
\\& \quad + C\int^t_0\|(v,\beta)(\tau)\|_{L^\infty}\|U,\Theta\|_{B^{2+2/p}_{p,1}}\dd \tau+C\int^t_0\|\Theta\|_{B^{1+2/p}_{p,1}}\dd \tau\\
&\leq C\int^t_0\|(v,\beta)(\tau)\|_{B^{1+2/p}_{p,1}}(1+\|v,\beta\|_{B^{1+2/p}_{p,1}}+\|U,\Theta\|_{B^{1+2/p}_{p,1}})\dd \tau
\\& \quad + Ct\Gamma^3_nn^{\fr{2}{p}-s}\ep_n^{1-\frac{2}{s}(2+\fr2p)}n^{2+\fr{2}{p}-s}+Ct\ep_n^{1-\frac{2}{s}(1+\fr2p)} n^{1+\fr{2}{p}-s},
\end{align*}
which implies
\bbal
\|(v,\beta)(t)\|_{B^{1+2/p}_{p,1}}&\leq C\f(\Gamma^3_n\ep_n^{-\frac{2}{s}(3+\fr2p)}n^{1+\fr{2}{p}-s}+\ep_n^{-\frac{2}{s}(2+\fr2p)}\g)\exp\f\{C\ep_n^{-\frac{4}{s}(1+\fr1p)}\g\}
\\&\leq C\f(\Gamma^3_n\cdot\log\log n\cdot n^{1+\fr{2}{p}-s}+1\g)\log n \\
&\leq \Gamma^{\fr32}_nn^{1+\fr{2}{p}-s}.
\end{align*}
From the above three steps, we deduce that \eqref{ok} holds and thus we complete the proof.
\end{proof}
\subsection{Completion of the Proof}\label{sec5}

With all the ingredients in hand, we can conclude the proof of the main theorem. Although the initial data satisfies $\|u_0,\theta_0\|_{W^{1,\infty}}\approx \varepsilon_nn^{1+\fr{2}{p}-s}$, we also show that $(u,\theta)\in C([0,T_n]; B^\infty_{p,1})$ for $T_n= n^{s-1-\fr{2}{p}}\ep_n^{-(1+\fr2s)}$.

Using Proposition \ref{pro0}, we see that
$$\|u_{0},\theta_0\|_{B^s_{p,1}}\approx \varepsilon_n.$$
Using Propositions \ref{pro2} and \ref{pro3}, we deduce that
\bbal
\|\theta(T_n)\|_{B^{s}_{p,\infty}}&\geq \|\Theta(T_n)\|_{B^{s}_{p,\infty}}-\|\beta(T_n)\|_{B^{s}_{p,1}}
\\&\geq \delta_n\f\|h(nr)\sin\f(\arctan\frac{y}{x}-\frac{\delta_n}{r} T_n\g)\g\|_{B^{s}_{p,\infty}}-\Gamma_n
\\&\geq c \delta_n n^{s-\fr{2}{p}}(n\delta_nT_n)^s-\Gamma_n\\
&\geq c\ep^{-1}_n-\Gamma_n.
\end{align*}
This
concludes the proof of Theorem \ref{th1}.

\section*{Declarations}

\noindent\textbf{Availability of data and materials}\\
 No data was used for the research described in the article.
\vspace*{1em}

\noindent\textbf{Conflict of interest}\\
The authors declare that they have no conflict of interest.
\vspace*{1em}

\noindent\textbf{Funding}\\
J. Li is supported by the Ganpo Talents Project of Jiangxi Province (No.gpyc20240069) and Natural Science Foundation of Jiangxi Province (No.20252BAC210004 and No. 20252BAC240186).

\end{document}